\numberwithin{equation}{section}
\theoremstyle{plain}
\newcommand{\divides}{\mid}
\DeclareMathOperator{\dime}{dim}
\DeclareMathOperator{\pideg}{PI-deg}
\DeclareMathOperator{\gcdi}{gcd}
\DeclareMathOperator{\diagonal}{diag}
\DeclareMathOperator{\tors}{tor}
\DeclareMathOperator{\kere}{ker}
\DeclareMathOperator{\ran}{rank}
\DeclareMathOperator{\mspect}{Maxspec}
\numberwithin{equation}{section}
\newtheorem{theo}{Theorem}[section]
\newtheorem{defi}{Definition}[section]
\newtheorem{lemma}{Lemma}[section]
\newtheorem{remark}{Remark}[section]
\newtheorem{coro}{Corollary}[subsection]
\newtheorem{prop}{Proposition}[section]
\newtheorem*{Theorem D}{Theorem D}
\newtheorem*{Theorem C}{Theorem C}
\newtheorem*{Theorem A}{Theorem A}
\newtheorem*{Theorem B}{Theorem B}
\begin{document}

\title{Azumaya Loci Of The Quantum Euclidean $2n$-Space.}
\keywords{Azumaya locus, quantum euclidean $2n$-space, maximal dimensional simple modules, Polynomial Identity algebra, PI degree.}
\subjclass[2010]{16D25; 16D60; 16D70; 16S85; 16T20; 16R20.}

\author[Snehashis Mukherjee]{
Snehashis Mukherjee}

\address{Ramakrishna Mission Vivekananda Educational and Research Institute\\ School of Mathematical Science\\Belur, Howrah, India, 711202.}

\email{tutunsnehashis@gmail.com}

\begin{abstract} This article delves into the Azumaya loci of quantum euclidean 2n-space, providing a comprehensive exploration. We introduce a significant class of maximal-dimensional simple modules associated with this algebra. Moreover, we establish a proof asserting that the maximal-dimensional simple modules are exactly the ones we have constructed. The computation of the algebra's center, coupled with the necessary and sufficient conditions derived for maximal-dimensional simple modules, enables us to determine the Azumaya locus of the algebra.
\end{abstract}

\maketitle
\section{Introduction}	
A fundamental
invariant of a noncommutative prime polynomial identity ring $R$ is its Azumaya locus $A(R) \subset \text{Maxspec} \ Z(R)$,
the dense open subset of Maxspec $Z(R)$ which parametrizes the maximal dimensional simple $R$-modules.  More precisely, $\mathbf{m} \in A(R)$ if and only if $\mathbf{m}R$ is the annihilator in
$R$ of a simple $R$-module V with dim$_\mathbb{K}V = n$, where $n$ is the PI-degree of $ R$, the
maximal $\mathbb{K}$-dimension of simple $R$-modules.\\
Let $\mathbb{K}$ be a field and let $\mathbb{K}^*$ denotes the group $\mathbb{K}\setminus\{0\}$. Let $q \in \mathbb{K}^*$. The coordinate ring $\mathcal{O}_{q}(\mathfrak{o}\mathbb{K}^{2n})$ of the quantum euclidean $2n$-space is the algebra generated over the field $\mathbb{K}$ by the variables $x_1,\cdots ,x_n,y_1, \cdots, y_n$ subject to the relations
\begin{align*}
x_ix_j&=qx_jx_i \ \ \ \ \ \ \ \ \ \ \ \ \forall\ \ \ i<j. \\
    y_iy_j&=q^{-1}y_jy_i \ \ \ \ \ \ \ \ \ \ \ \ \forall\ \ \ i<j.\\
x_iy_j&=q^{-1}y_jx_i \ \ \ \ \ \ \ \ \ \ \ \ \forall \ \ \ i\neq j.\\
x_iy_i&=y_ix_i+\sum_{l<i}(1-q^{-2})y_lx_l \ \ \ \ \forall i.
\end{align*} 
The algebra $\mathcal{O}_{q}(\mathfrak{o}\mathbb{K}^{N})$ first arose in \cite{fa} and was given a simpler set of relations in \cite{mu}. The generators for the even case with $N=2n$ are given by $X_1,X_{1'}, \cdots, X_n,X_{n'}$. To get the above relations set $x_i=X_{(n+1-i)'}$ and $y_i=q^{(n+1-i)}X_{(n+1-i)}$.
\par Musson studied ring-theoretic properties of the coordinate rings of quantum euclidean space in \cite{mu}. Primitive ideals in the coordinate ring of quantum euclidean space was studied by Sei Qwon Oh and Chun Gil Park in \cite{oo}. Oh also wrote an article on quantum and poisson structures of multi-parameter symplectic and euclidean spaces \cite{o}. All these papers consider the generic case. Although different aspects of quantum euclidean space have been studied in all these years its non-generic version has not been studied in details yet.\\ 
It is noteworthy that recent investigations by Brown and Yakimov \cite{bry} have provided valuable insights into the correlation between the Azumaya locus $\mathcal{AL}(R)$ and the discriminant ideal $D_{l}$ of a prime PI algebra $R$. Their main theorem \cite[Main Theorem]{bry} establishes, under mild assumptions, the alignment of the zero set of the discriminant ideal $D_l(Z/Z(R),tr)$ of a prime affine polynomial identity (PI) algebra $R$ with the zero set of the modified discriminant ideal $MD_l(Z/Z(R),tr)$ of $R$. They offer an explicit description of this set in terms of the dimensions of the irreducible representations of $R$.

Additionally, Brown and Yakimov prove that when $l$ is the square of the PI-degree of $R$, this zero set precisely complements the Azumaya locus of $R$. They leverage this description to classify the Azumaya loci of the multiparameter quantized Weyl algebras at roots of unity.

\par The purpose of this paper is to give a complete description of the Azumaya loci of the quantum euclidean $2n$-space at roots of unity. We compute the center of quantum euclidean $2n$-space at roots of unity, give a  necessary and sufficient condition for obtaining maximal dimensional simple modules and then describe the Azumaya loci.

Since Azumaya locus corresponds to maximal dimensional simple modules, in this article we will focus on constructing maximal dimensional simple modules over this algebra for non generic case. For an infinite-dimensional non-commutative algebra, the classification of its simple modules poses a formidable challenge in general. It is noteworthy that, despite the significance of studying the representation theory of quantum algebras, no comprehensive method has been established to date.
\par The existing examples of successfully classified algebras primarily revolve around generalized Weyl algebras, as seen in works such as \cite{ba1}, \cite{ba2}, \cite{ba3}, \cite{ba5}, \cite{ba6}, \cite{ba7}, \cite{ba8}, and \cite{ba9}. Additionally, Ore extensions with Dedekind rings as coefficient rings have been explored in \cite{ba4}.

The classification of finite-dimensional simple $R$-modules for a specific class of iterated skew polynomial rings, denoted as $R=A[y;\alpha][x;\alpha^{-1},\delta]$, where $A$ is an affine domain over an algebraically closed field, was achieved by Jordan in \cite{dj}. In this article, we present a novel approach to constructing simple modules over $\mathcal{O}_{q}(\mathfrak{o}\mathbb{K}^{2n})$. \\
  In the present article our main focus is on the study of the following problem: To prove the necessary and sufficient condition for obtaining maximal dimensional simple modules over $\mathcal{O}_{q}(\mathfrak{o}\mathbb{K}^{2n})$ for non generic case.\\
 Let us define for each $i, \ 1 \leq i \leq n, \ \omega_i:=\sum_{l \leq i}(1-q^{-2})y_lx_l$. These elements are going to play a crucial role throughout the article.
\begin{remark} \cite[Lemma 2.1]{aj}
Note that for all $1 \leq i \leq n$, $\omega_i$ is a normal element of  $\mathcal{O}_{q}(\mathfrak{o}\mathbb{K}^{2n})$. More precisely, we have
\begin{itemize}
    \item [1.] For all $i,j$ with $ 1 \leq i < j \leq n$, $\omega_ix_j=q^2x_j\omega_i$ and $\omega_iy_j=q^{-2}y_j\omega_i$.
    \item[2.] For all $i, \ 1 \leq j  \leq i \leq n$, $\omega_ix_j=x_j\omega_i$ and $\omega_iy_j=y_j\omega_i$.
    \item[3.] For all $i,j$ with $1 \leq i,j \leq n$, $\omega_i\omega_j=\omega_j\omega_i$.
\end{itemize}
\end{remark}
Let $A$ be an algebra and $M$ be a right $A$-module and $S\subset A$ be a right Ore set. The submodule
$$\tors_{S}(M):=\{m\in M:ms=0\ \text{for some}\  s\in S\}\ $$
is called the $S$-torsion submodule of $M$. The module $M$ is said to be $S$-torsion if tor$_{S}(M)=M$ and $S$-torsion-free if tor$_{S}(M)=0$. If the Ore set $S$ is generated by $x\in A$, we simply say that the $S$-torsion/torsion-free module $M$ is $x$-torsion/torsion-free.
\par A non zero element $x$ of an algebra ${A}$ is called a normal element if $x{A}={A}x$. Clearly if $x$ is a normal element of $A$, then the set $\{x^i:~i\geq 0\}$ is an Ore set generated by $x$. The next lemma is obvious.
\begin{lemma}\label{itn}
Suppose that $A$ is an algebra, $x$ is a normal element of $A$ and $M$ is a simple $A$-module. Then either $Mx=0$ (if $M$ is $x$-torsion) or the map $x_{M}:M\rightarrow M, m\mapsto mx$ is an isomorphism (if $M$ is $x$-torsion-free).
\end{lemma}
The above lemma says that the action of a normal element on a simple module is either trivial or invertible.
So each $\omega_i$, $x_1,y_1$ are normal in $\mathcal{O}_{q}(\mathfrak{o}\mathbb{K}^{2n})$. Let $\Omega$ be the multiplicative set generated by $\omega _i$ for all $i=1,\cdots,n-1$. So for any simple module $N$ over $\mathcal{O}_{q}(\mathfrak{o}\mathbb{K}^{2n})$, 
 \begin{center}
    $ \tors_\Omega(N)$=\{$n \in N : n\omega =0$ for some $\omega \in \Omega\}$
 \end{center}
 is either trivial or $N$.
\par Throughout the paper a module means a right module, $q$ is a primitive $m$-th root of unity,  $m$ being odd and $\mathbb{K}$ is an algebraically closed field. Also, $+$ denotes the addition in $\mathbb{Z}/m\mathbb{Z}$.
\par The arrangement of the paper is as follows:
Section $2$ provides an introduction to Polynomial Identity algebras, with a focus on establishing the main properties. Theorem $\ref{finite}$ asserts that $\mathcal{O}_{q}(\mathfrak{o}\mathbb{K}^{2n})$ qualifies as a prime PI algebra, accompanied by a discussion on Hanyal's findings concerning its PI degree.

Moving on to Section $3$, the PI degree computation of the prime factor of $\mathcal{O}_{q}(\mathfrak{o}\mathbb{K}^{2n})$ is presented. Notably, we present the subsequent result:
\begin{Theorem A}(Theorem \ref{main})
Under the roots of unity assumption for any $p$ with $2 \leq p \leq n-1$, the factor algebra $\mathcal{O}_q(\mathfrak{o}\mathbb{K}^{2n})/\langle \omega_p \rangle$ is a prime PI ring and $\pideg\mathcal{O}_q(\mathfrak{o}\mathbb{K}^{2n})/\langle \omega_p \rangle = m^{n-2}$.
\end{Theorem A}
Section $4$ provides a comprehensive exposition on the explicit structure of all potential maximal dimensional simple modules over $\mathcal{O}_{q}(\mathfrak{o}\mathbb{K}^{2n})$. Additionally, we establish that the set of maximal dimensional simple modules introduced in Section $4$ forms the entirety of such modules. The key finding in this section is encapsulated in the following theorem:
\begin{Theorem B}(Theorem \ref{maini})
 $N$ is a maximal dimensional simple $\mathcal{O}_q(\mathfrak{o}\mathbb{K}^{2n})$-module if and only if $N$ is $\omega_2,\cdots,\omega_{n-1}$ torsion-free simple $\mathcal{O}_{q}(\mathfrak{o}\mathbb{K}^{2n})$-module.
 \end{Theorem B}
In Section $5$, the focus is on determining the center of $\mathcal{O}_{q}(\mathfrak{o}\mathbb{K}^{2n})$, and the following theorem is established:
\begin{Theorem C}(Theorem \ref{cen})
The center $Z_n$ is the subalgebra generated by $x_1^ay_1^b$ where $a,b \in \{1,\cdots,m-1\}$ such that $a+b=m$, $x_i^m,y_i^m,i=1,\cdots,m$.
\end{Theorem C}
Additionally, an essential result (Lemma $\ref{vimp}$) is proven, involving the normal elements $\omega_i$ in $\mathcal{O}_{q}(\mathfrak{o}\mathbb{K}^{2n})$. Leveraging Lemma $\ref{vimp}$ and Theorem $\ref{cen}$, along with the necessary and sufficient condition derived for maximal dimensional simple modules in Section $4$, Section $5$ culminates in determining the Azumaya loci of quantum euclidean $2n$-space. The conclusive result is encapsulated in the following theorem:
\begin{Theorem D}
Let $\Psi_{\mathbf{m}}:Z_n \rightarrow \mathbb{K}$ denote the central character of $\mathbf{m} \in\mspect ~Z_n\left(\mathcal{O}_{q}(\mathfrak{o}\mathbb{K}^{2n})\right)$. Suppose that $\Psi_{\mathbf{m}}(x_i^m)=\alpha_i$ and $\Psi_{\mathbf{m}}(y_i^m)=\beta_i$. Then the
Azumaya locus of $\mathcal{O}_{q}(\mathfrak{o}\mathbb{K}^{2n})$ is given by
\begin{align*}
\mathcal{AL}(\mathfrak{o}\mathbb{K}^{2n})&=\{ \mathbf{m} \in  \text{Maxspec}~Z_n\left(\mathcal{O}_{q}(\mathfrak{o}\mathbb{K}^{2n})\right) :\Psi_{\mathbf{m}}(\omega_i^m) \neq 0\}\\ 
&=\{ \mathbf{m} \in \text{ Maxspec}~Z_n(\mathcal{O}_{q}(\mathfrak{o}\mathbb{K}^{2n})) : \sum_{l \leq i}\beta_l \alpha_l\neq 0,\ \ 2\leq i\leq n-1\}.
\end{align*}
\end{Theorem D}

\section{Preliminaries}

In this section, we recall relevant information about the Polynomial Identity algebra and we prove that $\mathcal{O}_q(\mathfrak{o}\mathbb{K}^{2n})$ is a PI algebra which enables us to comment on the $\mathbb{K}$-dimension of simple modules over $\mathcal{O}_q(\mathfrak{o}\mathbb{K}^{2n})$ at roots of unity.
\par A ring $R$ is said to be a Polynomial Identity (PI) ring if $R$ satisfies some monic polynomial $f\in \mathbb{Z}\langle x_1,\ldots,x_k\rangle$ i.e., $f(r_1,\ldots,r_k)=0$ for all $r_i\in R$.  The minimal degree of a PI ring $R$ is the least degree of all monic polynomial identities for $R$. PI rings cover a large class of rings including commutative rings. Commutative rings satisfy the polynomial identity $x_1x_2-x_2x_1$ and therefore have minimal degree $2$. Let us recall a result which provides a sufficient condition for a ring to be PI. 
\begin{prop}\emph{(\cite[Corollary 13.1.13]{mcr})}{\label{f}}
If $R$ is a ring that is a finitely generated module over a commutative subring, then $R$ is a PI ring.
\end{prop}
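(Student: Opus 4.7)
The plan is to exhibit an explicit element of $\mathbb{Z}\langle X\rangle$ that vanishes identically on $R$. The natural candidate is the standard polynomial
\[
s_{2n}(x_1,\dots,x_{2n}) \;=\; \sum_{\sigma \in S_{2n}} \mathrm{sgn}(\sigma)\, x_{\sigma(1)} x_{\sigma(2)} \cdots x_{\sigma(2n)},
\]
where $n$ is the number of generators of $R$ as a module over the commutative subring $C$. This polynomial is tailor-made for such arguments: it is multilinear in each slot and alternating, i.e.\ vanishes as soon as any two of its arguments coincide.

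Let $r_1,\dots,r_n$ be a chosen set of $C$-module generators of $R$, so that every $a \in R$ admits an expression $a = \sum_j c_j r_j$ with $c_j \in C$. Given $a_1,\dots,a_{2n} \in R$ with $a_k = \sum_j c_{k,j} r_j$, multilinearity of $s_{2n}$ (together with the possibility of moving scalars from $C$ past the $r_j$) yields
\[
s_{2n}(a_1,\dots,a_{2n}) \;=\; \sum_{j_1,\dots,j_{2n}} c_{1,j_1}\cdots c_{2n,j_{2n}}\, s_{2n}(r_{j_1},\dots,r_{j_{2n}}).
\]
In every tuple $(j_1,\dots,j_{2n})$ the $2n$ indices are drawn from only $n$ possible values, so the pigeonhole principle forces a coincidence $j_k = j_{k'}$; the alternating property then makes each summand vanish. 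Hence $s_{2n}$ is an identity on $R$, which is therefore PI of minimal degree at most $2n$.

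The main obstacle is the step of moving scalars from $C$ outside the monomials $r_{j_1} \cdots r_{j_{2n}}$: for this one needs the elements of $C$ to commute not just with one another but with \emph{every} element of $R$. In other words, the argument above actually requires $C$ to be a \emph{central} commutative subring, which is how the proposition is intended in \cite{mcr}. When the hypothesis is only that $C$ is commutative, one first passes to the centralizer, or, equivalently, represents $R$ faithfully as a subring of the endomorphism ring of a finitely generated free module over a commutative ring and then invokes the Amitsur--Levitzki theorem for $M_n$ of a commutative ring; either route delivers the same conclusion and the same quantitative bound on the PI-degree, which will be relevant when we bring in the results of \cite{di} in Section~3.
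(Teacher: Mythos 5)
The paper itself offers no proof here: Proposition~\ref{f} is cited verbatim from McConnell--Robson, so there is no internal argument to compare yours against. That said, your central-case argument is the standard one and is correct: for $C$ a \emph{central} commutative subring with $R=\sum_{j=1}^n Cr_j$, the multilinearity and alternation of $s_{2n}$ together with the pigeonhole principle immediately give $s_{2n}\equiv 0$ on $R$. You are also right to flag that pulling the scalars $c_{k,j}$ out of the monomials is exactly where centrality is used. Since the paper only invokes this proposition in the proof of Proposition~\ref{finite}, where the subring $Z$ generated by $x_i^m,y_i^m$ is explicitly central, the central-case argument you give is all that is actually needed downstream.

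Two caveats about your remarks on the general (non-central) case. First, ``pass to the centralizer'' does not work as stated: if $D=C_R(C)$ then indeed $C\subseteq Z(D)$, but $D$ need not be finitely generated over $C$ (you would need $C$ Noetherian), and even then you would conclude only that $D$ is PI, not $R$. Second, the phrase ``subring of the endomorphism ring of a finitely generated \emph{free} module'' is not quite right either: $R$ embeds via left multiplication in $\mathrm{End}_C(R_C)$, and $R_C$ is finitely generated but typically not free. The correct bookkeeping is that a surjection $\pi\colon C^n\twoheadrightarrow R_C$ lets every $C$-endomorphism of $R_C$ lift (by projectivity of $C^n$) to an endomorphism of $C^n$ preserving $\ker\pi$; hence $\mathrm{End}_C(R_C)$, and so $R$, is a sub-\emph{quotient} of $M_n(C)$, and Amitsur--Levitzki then applies. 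This yields the same identity $s_{2n}$ and the same degree bound, so your quantitative conclusion survives, but the structural claim needs that adjustment.
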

We now define the PI degree of prime PI-algebras. This definition will suffice because the algebra covered in this article is prime. We now recall one of the fundamental results from the Polynomial Identity theory.
\begin{theo}[Posner's Theorem {\cite[Theorem 13.6.5]{mcr}}]\label{pos}
 Let $A$ be a prime PI ring with centre $Z(A)$ and minimal
degree $d$. Let $S=Z(A)\setminus\{0\}$, $Q=AS^{-1}$ and $F=Z(A)S^{-1}$. Then $Q$ is a central simple algebra with centre $F$ and $\dime_{F}(Q)=(\frac{d}{2})^2$.
\end{theo} 
\begin{defi}
The PI degree of a prime PI ring $A$ with minimal degree $d$ is $\pideg(A)=\frac{d}{2}$.
\end{defi}
The following result provides an important link between the PI degree of a prime affine PI algebra over an algebraically closed field and the $\mathbb{K}$-dimension of its irreducible representations (cf. \cite[Theorem I.13.5, Lemma III.1.2]{brg}). 
\begin{prop}\label{sim}
Let $A$ be a prime affine PI algebra over an algebraically closed field $\mathbb{K}$, with PI-deg($A$) = $n$ and $V$ be a simple $A$-module. Then $V$ is a vector space over $\mathbb{K}$ of dimension $t$, where $t \leq n$, and $A/ann_A(V) \cong M_t(\mathbb{K})$. Moreover, the upper bound PI-deg($A$) is attained by some simple $A$-modules.
\end{prop}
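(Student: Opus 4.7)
The plan is to reduce the statement to Kaplansky's theorem (which is already recorded in the excerpt) together with a Nullstellensatz-type input for affine PI algebras. First I would observe that since $V$ is a simple $A$-module, the quotient $\bar{A}:=A/\mathrm{ann}_A(V)$ acts faithfully and irreducibly on $V$, so $\bar{A}$ is a primitive ring. Because any homomorphic image of a PI ring is again PI, $\bar{A}$ is a primitive PI ring. Kaplansky's theorem (Proposition 2.3 in the excerpt) then says that its minimal degree is some even $2t$ and that $\bar{A}$ is central simple of dimension $t^2$ over its centre $Z(\bar{A})$.

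The next step, and the key analytic point, is to show that $Z(\bar{A})=\mathbb{K}$. This is where the two hypotheses not used by Kaplansky enter: $A$ is affine over $\mathbb{K}$ (so $\bar A$ is too) and $\mathbb{K}$ is algebraically closed. The tool I would invoke is the PI version of the Nullstellensatz (Amitsur--Procesi): an affine primitive PI algebra over an algebraically closed field has centre equal to the ground field. Granting this, $\bar A$ becomes a central simple $\mathbb{K}$-algebra of dimension $t^2$ over the algebraically closed field $\mathbb{K}$, and since the Brauer group of such a field is trivial, the only such algebra up to isomorphism is $M_t(\mathbb{K})$. Hence $\bar A\cong M_t(\mathbb{K})$, as claimed.

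From $\bar A\cong M_t(\mathbb{K})$ the statement about $V$ is immediate: $M_t(\mathbb{K})$ has, up to isomorphism, a unique simple module, namely $\mathbb{K}^t$, so $\dim_{\mathbb{K}}V=t$. It remains to verify the inequality $t\leq n$. For this I would use that the PI-degree, being by definition half the minimal degree, cannot increase when passing to a homomorphic image: any monic multilinear identity of $A$ is inherited by $\bar A$, so $\mathrm{PI\text{-}deg}(\bar A)\leq \mathrm{PI\text{-}deg}(A)=n$. Since $\mathrm{PI\text{-}deg}(M_t(\mathbb{K}))=t$, this gives $t\leq n$.

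The only genuinely non-formal ingredient in the whole argument is the PI-Nullstellensatz used to identify the centre of $\bar A$ with $\mathbb{K}$; the rest is bookkeeping on top of Kaplansky's theorem and the triviality of the Brauer group of an algebraically closed field. So the main obstacle, which I would quote rather than reprove, is precisely this Nullstellensatz step.
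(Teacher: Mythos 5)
The paper records this as Theorem I.13.5 of Brown--Goodearl and offers no proof of its own, so there is no in-paper argument to compare against; the result is simply cited. Your proposal is correct and is the standard argument (and essentially the one in Brown--Goodearl): $A/\ann_A(V)$ is a primitive affine PI algebra; Kaplansky makes it central simple of dimension $t^2$ over its centre $Z$; the ``Nullstellensatz'' input you invoke is, concretely, the Artin--Tate lemma (since $A/\ann_A(V)$ is module-finite over $Z$ and affine over $\mathbb{K}$, $Z$ is an affine $\mathbb{K}$-algebra, and an affine field extension of an algebraically closed $\mathbb{K}$ must equal $\mathbb{K}$); then triviality of the Brauer group of $\mathbb{K}$ gives $M_t(\mathbb{K})$, and the inequality $t\le n$ follows because identities pass to quotients so the minimal degree, hence the PI-degree, cannot increase.
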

Now in the root of unity setting, the above proposition yields the following result.
\begin{theo} \label{finite}
 $\mathcal{O}_{q}(\mathfrak{o}\mathbb{K}^{2n})$ is a PI ring.
\end{theo}
\begin{proof}
Consider the polynomial algebra $P= \mathbb{K}[x_i^m,y_i^m: 1\leq i\leq n]$. Note $\mathcal{O}_{q}(\mathfrak{o}\mathbb{K}^{2n})$ becomes a finitely generated module over $P$. Hence the assertion follows from proposition \cite[Corollary 13.1.13]{mcr}.
\end{proof}
From Theorem $\ref{finite}$ and \cite[Theorem I.13.5]{brg}, we conclude if $M$ is a $\mathcal{O}_{q}(\mathfrak{o}\mathbb{K}^{2n})$-simple module, then $\dime_{\mathbb{K}}M \leq \pideg(\mathcal{O}_{q}(\mathfrak{o}\mathbb{K}^{2n}))$.
\par The following few results will be useful.
\begin{prop}\emph{\cite[Proposition 7.1]{di}}\label{quan}
Let $\mathbf{q}=\left(q_{ij}\right)$ be an  $n \times n$ multiplicatively antisymmetric matrix over $\mathbb{K}$.
 Suppose that $q_{ij}=q^{h_{ij}}$ for all $i,j$, where $q \in \mathbb{K}^*$ is a primitive $l$-th root of unity and the $h_{ij} \in \mathbb{Z}$. Let $h$ be the cardinality of the image of the homomorphism 
\[
    \mathbb{Z}^n \xrightarrow{(h_{ij})} \mathbb{Z}^n \xrightarrow{\pi} \left(\mathbb{Z}/l\mathbb{Z}\right)^n,
\]
where $\pi$ denotes the canonical epimorphism. Then \[\pideg(\mathcal{O}_{\mathbf{q}}(\mathbb{K}^n))=\pideg(\mathcal{O}_{\mathbf{q}}\left(\mathbb{(K^*)}^n\right)=\sqrt{h}.\]
\end{prop}
It is well known that a skew-symmetric matrix over $\mathbb{Z}$ such as our matrix $H:=(h_{ij})$ can be brought into a $2\times 2$ block diagonal form (commonly known as skew normal form) by a unimodular matrix $W\in GL_n(\mathbb{Z})$. Then $H$ is congruent to a block diagonal matrix of the form: 
\[WHW^{t}=\diagonal\left(\begin{pmatrix}
0&h_1\\
-h_1&0
\end{pmatrix},\cdots,\begin{pmatrix}
0&h_s\\
-h_s&0
\end{pmatrix},\bf{0}_{n-2s}\right),\]
where $\bf{0}_{n-2s}$ is the square matrix of zeros of dimension equals $\dime(\kere H)$, so that $2s=\ran(H)=n-\dime(\kere H)$ and $h_i\mid h_{i+1}\in \mathbb{Z}\setminus\{0\},~~\forall \ \ 1\leq i \leq s-1$. The nonzero $h_1,h_1,\cdots,h_s,h_s$ (each occurs twice) are called the invariant factors of $H$. The following result simplifies the calculation of $h$ in the statement of proposition (\ref{quan}) by the properties of the integral matrix $H$, namely the dimension of its kernel, along with its invariant factors and the value of $l$.
\begin{lemma}\emph{(\cite[Lemma 5.7]{ar})}\label{mainpi}
Take $1\neq q\in \mathbb{K}^*$, a primitive $m$-th root of unity. Let $H$ be a skew symmetric integral matrix associated to $\mathbf{q}$ with invariant factors $h_1,h_1,\cdots,h_s,h_s$. Then PI degree of $\mathcal{O}_{\mathbf{q}}(\mathbb{K}^n)$ is given as \[\pideg(\mathcal{O}_{\mathbf{q}}(\mathbb{K}^n))=\pideg(\mathcal{O}_{\mathbf{q}}\left(\mathbb{(K^*)}^n\right)=\prod_{i=1}^{\frac{n-\dime(\kere H)}{2}}\frac{m}{\gcdi(h_i,m)}.\]
\end{lemma}
An easy corollary to Lemma \ref{mainpi} is the following
\begin{coro}\label{PI}
    $\pideg(\mathcal{O}_{\mathbf{q}}(\mathbb{K}^n))=\pideg(\mathcal{O}_{\mathbf{q}}\left(\mathbb{(K^*)}^n\right) \leq m^{\frac{\ran H}{2}}$.
\end{coro}
In \cite[Example 5.4]{han}, Hanyal proved that the following
\begin{align*}
  PI-deg(\mathcal{O}_{q}(\mathfrak{o}\mathbb{K}^{2n})=\left \{
\begin{array}{lc}
m^{n-1} \ \ \ \  \text{when} \ \  m \  \text{is odd}.\\~\\
 \frac{m^{n-1}}{2^{n-2}} \ \ \ \ \text{when} \ \ m \in 4\mathbb{Z}.\\~\\
 \frac{m^{n-1}}{2^{\lfloor\frac{n-1}{2}\rfloor}} \ \ \ \ \text{when} \ \ m \in 2\mathbb{Z} \ \text{but} \ m \notin 4\mathbb{Z}
\end{array}
\right. 
\end{align*}

\section{PI degree of prime factor algebra of quantum euclidean $2n$-space.}
Our main goal in this section is to give an explicit description of PI degree of prime factor algebra $\mathcal{O}_{q}(\mathfrak{o}\mathbb{K}^{2n})/\langle \omega_p\rangle$ for $2 \leq p \leq n$.
\begin{theo}\label{main} 
Under the roots of unity assumption for any $p$ with $2 \leq p \leq n-1$, the factor algebra $\mathcal{O}_q(\mathfrak{o}\mathbb{K}^{2n})/\langle \omega_p \rangle$ is a prime PI ring and $\pideg\mathcal{O}_q(\mathfrak{o}\mathbb{K}^{2n})/\langle \omega_p \rangle = m^{n-2}$.
\end{theo}
\begin{proof}
Fix an index $p$ such that $2 \leq p \leq n-1$. Then the factor algebra $\mathcal{O}_q(\mathfrak{o}\mathbb{K}^{2n})/\langle \omega_p\rangle$ is a domain (see \cite[Lemma 2.6]{aj}). Hence the algebra $\mathcal{O}_q(\mathfrak{o}\mathbb{K}^{2n})/\langle \omega_p \rangle$ is a prime PI affine algebra under roots of unity assumption. Let $A_p$ be the $\mathbb{K}$-algebra generated by the variables $X_1,\ldots,X_n,Y_1,\ldots, Y_n$ subject to the relations:
\[\begin{array}{ll}
X_iX_j=qX_jX_i, \ \forall i<j, \ \ Y_iY_j=q^{-1}Y_jY_i, &\forall i<j.\\
X_iY_j=q^{-1}Y_jX_i, \ \ \forall i\neq j, \ \ X_{p}Y_{p}=q^{-2}Y_{p}X_{p},\\
X_iY_i=Y_iX_i+\sum_{k<i}(1-q^{-2})Y_kX_k, &\forall i \neq p.
\end{array}\]

Note that it is easy to prove that the factor algebra $\mathcal{O}_q(\mathfrak{o}\mathbb{K}^{2n})/\langle \omega_p \rangle$ is isomorphic to the factor algebra $A_p/I_p$ where $I_p$ is the ideal of $A_p$ generated by $\sum_{k=1}^{p}(1-q^{-2})Y_kX_k$. Hence we conclude that
\begin{equation}\label{noteq}
    \pideg (\mathcal{O}_q(\mathfrak{o}\mathbb{K}^{2n})/\langle \omega_p \rangle)=\pideg{(A_p/I_p)}\leq \pideg{(A_p)}.
\end{equation} 
So now our focus shifts towards the computation of PI-degree of $A_p$. The algebra $A_p$ also has an iterative skew polynomial presentation 
\[\mathbb{K}[Y_1][X_1;\tau_1][Y_2;\sigma_2][X_2;\tau_2,\delta_2]\cdots[Y_n;\sigma_n][X_n;\tau_n,\delta_n]\] where the automorphisms $\tau_i,\sigma_i$ and derivations $\delta_i$ are defined by 
\[\begin{array}{ll}
\sigma_i(Y_j)=qY_j,\ \ \ \ \ \ \ \ \ \ \ \ \  \sigma_i(X_j)=qX_j, &j<i.\\
\tau_i(Y_j)=q^{-1}Y_j,\ \ \ \ \ \ \ \ \ \ \ \ \  \tau_i(X_j)=q^{-1}X_j, &j <i.\\
 \tau_i(Y_i)=Y_i \ \ \forall i&\\
 \delta_i(X_j)=\delta_i(Y_j)=0, &j<i.\\
 \delta_i(Y_i)=\sum _{k < i}(1-q^{-2})Y_kX_k, \ \  & \forall i \neq p+1.\\
 \delta_{p+1}(Y_{p+1})=0.
\end{array}
\]
One checks that the hypothesis 
of the derivation erasing process independent of characteristic due to by A. Leroy and J. Matczuk \cite[Theorem 7]{lm2} holds for $A_p$, so we obtain $\pideg(A_p)=\pideg(\mathcal{O}_H(\mathbb{K}^{2n}))$. 
\par Now the $2n \times 2n$ matrix $H$ is of the form 
\begin{center}
    $\begin{pmatrix}
0 & 0 & -1 & 1 &-1 & 1  &\ldots & -1 & 1 & -1 & 1 &\ldots & -1 & 1\\
0 & 0 & -1 & 1 &-1 & 1  &\ldots & -1 & 1 & -1 & 1 &\ldots & -1 & 1\\
1 & 1 & 0 & 0 &-1 & 1  &\ldots & -1 & 1 & -1 & 1 &\ldots & -1 & 1\\
-1 & -1 & 0 & 0 &-1 & 1  &\ldots & -1 & 1 & -1 & 1 &\ldots & -1 & 1\\
\vdots &\vdots &\vdots &\vdots &\vdots &\vdots &\ddots&\vdots&\vdots&\vdots&\vdots&\ddots&\vdots&\vdots\\
1 & 1 & 1 & 1 & 1 & 1  &\ldots & 0 & 2 & -1 & 1 &\ldots & -1 & 1\\
-1 & -1 & -1 & -1 & -1 & -1  &\ldots & -2 & 0 & -1 & 1 &\ldots & -1 & 1\\
1 & 1 & 1 & 1 & 1 & 1  &\ldots & 1 & 1 & 0 & 0 &\ldots & -1 & 1\\
-1 & -1 & -1 & -1 & -1 & -1  &\ldots & -1 & -1 & 0 & 0 &\ldots & -1 & 1\\
\vdots &\vdots &\vdots &\vdots  &\vdots &\vdots &\ddots&\vdots&\vdots&\vdots&\vdots&\ddots&\vdots&\vdots\\
1 & 1 & 1 & 1 & 1 & 1  &\ldots & 1 & 1 & 1 & 1 &\ldots & 0 & 0\\
-1 & -1 & -1 & -1 & -1 & -1  &\ldots & -1 & -1 & -1 & -1 &\ldots & 0 & 0
\end{pmatrix}$
\end{center}
where
\begin{itemize}
    \item the $2p-1$-th row is $(1 , 1 , 1 , 1 ,\ldots , 0 , 2, -1, 1 ,\ldots , -1 , 1)$.
    \item  the $2p$-th row is $(-1 ,- 1 , -1 , -1 ,\ldots , -2 , 0 ,-1,1,\ldots , -1 , -1)$.
    \item the $2p+1$-th row is $(1 , 1 , 1 , 1 , 1 , 1  ,\ldots , 1 , 1 , 0 , 0 ,\ldots , -1 ,1)$.
    \item The $2p+2$-th row is $(-1 , -1 , -1 , -1 , -1 , -1  ,\ldots , -1 , -1 , 0 , 0 ,\ldots , -1 ,1)$
\end{itemize}
  Now we perform the following elementary row operations:
\begin{itemize}
\item Replace row $1$ with row $1-~\text{row}~2$.
    \item Replace row $(2p-1)$ with row $(2p-1)~-~\text{row}~2p$.
    \item Replace row $(2p+1)$ with row $(2p+1)~-~\text{row}~(2p+2)$.
    \item Replace row $(2n-1)$ with row $(2n-1)~+~\text{row}~2n$
\end{itemize}
The resulting matrix has two zero rows: row $1$ and row $(2n-1)$ and two identical rows: row $(2p-1)$ and row $(2p+1)$, which are both equal to $(2,2,\ldots,2,2,0,0,\ldots,0)$ where the first $2p$ many coordinates are $2$.\\
Since the rank of a skew-symmetric matrix is even, we have $\text{rank}~H \leq 2n-4$. So from 
Corollary \ref{PI}, we have $\pideg(A_p)=\pideg(\mathcal{O}_H(\mathbb{K}^{2n})) \leq m^{n-2}$. Hence $\pideg\mathcal{O}_{q}(\mathfrak{o}\mathbb{K}^{2n})/\langle \omega_p \rangle \leq m^{n-2}$.\\
 Now we can establish the equality of the above
by constructing a simple $\mathcal{O}_{q}(\mathfrak{o}\mathbb{K}^{2n})/\langle \omega_p \rangle$-module with $\mathbb{K}$-dimension $m^{n-2}$.
\par  Let $(\underline{\alpha}, \underline{\lambda},\underline{\gamma}):=(\alpha_1,\ldots,\alpha_n,\lambda_1,\ldots,\lambda_n, \gamma_1,\gamma_2,\gamma_3)\in \mathbb{K}^{2n+3}$ be such that all the scalar except $\alpha_{p+1}, \lambda_p$ are non zero. Given such $(\underline{\alpha}, \underline{\lambda},\underline{\gamma})$, let $M(\underline{\alpha},\underline{\lambda},\underline{\gamma})$ be the $\mathbb{K}$-vector space with basis consisting of \[\{e(\underline{a})~|~\underline{a}=(a_1,\cdots,a_n),~0\leq a_i \leq m-1,~ 1\leq i\leq n~ \text{with}~ i \neq 1,p+1, a_1=a_{p+1}=0\}.\]
Let $e_{\pm i}$ denote the vector $(0,\cdots,{\pm 1},\cdots,0)$ whose $i$-th coordinate is $\pm 1$ and the remaining are zero. Define the $\mathcal{O}_{q}(\mathfrak{o}\mathbb{K}^{2n})$-module structure on the $\mathbb{K}$-space $M\left(\underline{\alpha},\underline{\lambda},\underline{\gamma}\right)$ by the action of each generator on the basis vectors as follows: 
\begin{align*}
    e(\underline{a})X_1&=\gamma_1  \prod_{\substack{j=2\\ j \neq i+1}}^{n}q^{-a_j}e(\underline{a})\\
e(\underline{a})Y_1&=\frac{\gamma_1^{-1}\lambda_1}{1-q^{-2}} \prod_{\substack{j=2\\ j \neq i+1}}^{n}q^{-a_j} e(\underline{a})\\
e(\underline{a})X_k&=\alpha_k\prod_{\substack{j=2\\ j \neq p+1}}^{k-1}q^{a_j}~e(\underline{a}+e_k) \ \ \text{for $k \neq p+1$}\\
e(\underline{a})Y_k&=\alpha_k^{-1}\prod_{\substack{j=2\\ j \neq p+1}}^{k-1}q^{-a_j}\smashoperator{\prod_{\substack{j=k+1\\ j \neq p+1}}^{n}}q^{-2a_j}\ \frac{\lambda_k-q^{-2a_k}\lambda_{k-1}}{1-q^{-2}}e(\underline{a}+e_{-k}) \ \ \text{for $k \neq p+1$}\\
e(\underline{a})X_{p+1} &=\alpha_p \prod\limits_{j=2}^{p-1}q^{a_j}\ q^{(a_p+1)}\ \gamma_2 \  (q^{-2}-1)\ \lambda_{p-1}^{-1}\  e(\underline{a}+e_p)\\
e(\underline{a})Y_{p+1}&=\alpha_p^{-1}\prod_{j=p+2}^{n}q^{-a_j}\smashoperator{\prod_{\substack{j=2 \\ j \neq p,p+1}}^{n}}q^{-a_j} q^{-(a_p-1)} \ \gamma_3 \ e(\underline{a}+e_{-p})
\end{align*}
It can be easily verified this is $\mathcal{O}_{q}(\mathfrak{o}\mathbb{K}^{2n})$-module with $M(\underline{\alpha},\underline{\lambda},\underline{\gamma})\omega_p=0$. Hence $M(\underline{\alpha},\underline{\lambda},\underline{\gamma})$ is a simple $\mathcal{O}_{q}(\mathfrak{o}\mathbb{K}^{2n})/\langle \omega_p \rangle$-module with $\mathbb{K}$-dimension $m^{n-2}$. Thus $\pideg\mathcal{O}_{q}(\mathfrak{o}\mathbb{K}^{2n})/\langle \omega_p \rangle=m^{n-2}$.
\end{proof}
\section{Maximal Dimensional Simple Modules.} 
\textbf{Notations:}
    Let $N$ be a simple module over $\mathcal{O}_{q}(\mathfrak{o}\mathbb{K}^{2n})$. Now, for each $i=1,\cdots,n$, $x_i^{m}$, $y_i^{m}$ are central elements of $\mathcal{O}_{q}(\mathfrak{o}\mathbb{K}^{2n})$ and hence they act as scalars on $N$. So, $x_i^{m}=\alpha_i$ and $y_i^{m}=\beta_i$ on $N$ for some $\alpha_i, \beta_i \in \mathbb{K}$. For each $\underline{a}=(a_1,a_2,\cdots,a_n)$, let us define 
\[b_{\underline{a}}=\prod_{i=n}^{1}b_{a,i}.\]
 where 
\[b_{a,i}=\left \{
    \begin{array}{cc}x_i^{a_i} \ \ \ \  \text{when} \ \  \alpha_i \neq 0
         &  \\
         y_i^{a_i} \ \ \ \ \text{when} \ \ \alpha_i = 0& 
    \end{array}
    \right.\]
Let us define\[ I:=\{i \in \{1,\cdots,n\}: \alpha_i=0\}, \ \
    J:=\{j \in \{1,\cdots,n\}: \beta_j=0\}\]
Also $e_i=(0,\cdots,0,1,0,\cdots,0)$ where $1$ is in the $i$-th place and all the other places are $0$ and and $e_{-i}=(0,\cdots,0,-1,0,\cdots,0)$ where $-1$ is in the $i$-th place and all the other places are $0$. Throughout this article we are going to use these notations.
\par Let us start by assuming $N$ is a $\Omega$-torsionfree simple $\mathcal{O}_{q}(\mathfrak{o}\mathbb{K}^{2n})$-module. 
Then the following two cases arise.\\
\textbf{Case I:}
$\mathbf{I = \emptyset}$. It is notable that the elements $x_1$, $\omega_i$, $i=1,\cdots,n$ in $\mathcal{O}_{q}(\mathfrak{o}\mathbb{K}^{2n})$ exhibits commutativity. So we have $v \in N$ with $v \neq 0$ such that  $v\omega_i=\lambda_iv$ for $i=1,\cdots,n$ and $vx_1=\gamma_1v$. Since $N$ is $\Omega$-torsionfree, for each $i=1,\cdots,n-1$, $\lambda_i$ and $\gamma_1$ are nonzero scalars. Let $N_1$ be the subspace of the vector space $N$ spanned by the vectors $e(\underline{a})$  where $e(\underline{a}):=vb_{\underline{a}}$ where $\underline{a}=(0,a_2,\cdots,a_n)$  where $0 \leq a_i \leq m-1$. We will show that these vectors span a submodule. In fact, after some direct calculation, we get the following results. For $i=1$, we have 
\begin{align*}
e(\underline{a})x_1
&= \gamma_1\prod_{j=2}^{n}q^{-a_j} e(\underline{a})\\
e(\underline{a})y_1&= \gamma_1^{-1}\lambda_1(1-q^{-2})^{-1}\prod_{j=2}^{n}q^{-a_j} e(\underline{a})\\
e(\underline{a})x_i&= \prod_{j=2}^{i-1}~q^{a_j}~ e(\underline{a}+e_i) \ \ \text{for $2 \leq i \leq n$}\\
e(\underline{a})y_i&= \prod_{j=2}^{i-1}q^{-a_j}\smashoperator{\prod_{j=i+1}^{n}}q^{-2a_j} ~\frac{\lambda_i-q^{-2a_i}\lambda_{i-1}}{1-q^{-2}} 
        e(\underline{a}+e_{-i})\ \ \text{for $a_i \neq 0$.}\\
e(\underline{a})y_i&=\alpha_i^{-1}\prod_{j=2}^{i-1}q^{-a_j}\smashoperator{\prod_{j=i+1}^{n}}q^{-2a_j} 
        \frac{(\lambda_i-\lambda_{i-1})}{(1-q^{-2})} 
        e(\underline{a}+e_{-i})\ \ \text{for $a_i = 0$.}
\end{align*}
Therefore, owing to simpleness of $N$, $N=N_1$. Now the following result deciphers the dimension of $N$.
\begin{theo}\label{eucrep}
$N$ has dimension $m^{n-1}$.
\end{theo}
\begin{proof}
We use the principle of induction to show that all the vectors $e(0,a_2,\cdots ,a_n)$ for $0 \leq a_i \leq m-1$ are linearly independent. Let the result be true for any such $k$ vectors. Let $S:=\{e\left(0,a_2^{(i)},\cdots,a_n^{(i)}\right):i=1,\cdots,k+1\}$ be a set of $k+1$ such vectors. Suppose  
\[ p:=\sum_{i=1}^{k+1} \zeta_i~e\left(0,a_2^{(i)},\cdots,a_n^{(i)}\right)=0 \] 
for some $\zeta_i\in \mathbb{K}$. Now, $e\left(0,a_2^{(k)},\cdots,a_n^{(k)}\right) \neq e\left(0,a_2^{(k+1)},\cdots,a_n^{(k+1)}\right)$.
We can choose the largest index $r$ such that $a_r^{(k)}\neq a_r^{(k+1)}$ in $\mathbb{Z}/{m\mathbb{Z}}$. Now the vectors $e\left(0,a_2^{(k)},\cdots,a_n^{(k)}\right)$ and $e\left(0,a_2^{(k+1)},\cdots,a_n^{(k+1)}\right)$ are eigenvectors of $x_ry_r$ associated with the eigenvalues
\[ q^{-2(a_n^{(k)}+\cdots+a_{r+1}^{(k)})}(1-q^{-2})^{-1}(\lambda_r-q^{-2a_r^{(k)}-2}\lambda_{r-1})=\nu_k(say) \]
and 
\[q^{-2(a_n^{(k+1)}+\cdots+a_{r+1}^{(k+1)})}(1-q^{-2})^{-1}(\lambda_r-q^{-2a_r^{(k+1)}-2}\lambda_{r-1})=\nu_{k+1} (say)\]
respectively. We claim that $\nu_k\neq \nu_{k+1}$. Indeed, 
\[\nu_k=\nu_{k+1} \implies q^{-2(a_r^{(k)}-a_r^{(k+1)})}=1 \implies m\divides \left(a_r^{(k)}-a_r^{(k+1)}\right),\]
which is a contradiction.
\par Now 
\[\begin{array}{ll}
    0&=px_ry_r-\nu_{k+1}p\\
    &=\sum_{i=1}^{k} \zeta_i(\nu_i-\nu_{k+1})~e\left(0,a_2^{(i)},\cdots,a_n^{(i)}\right)
\end{array}\]
Using induction hypothesis,
\begin{center}
    $\zeta_i(\nu_i-\nu_{k+1})=0$ for all $i=1,\cdots,k$.
\end{center}
As $\nu_k \neq \nu_{k+1}$, $\zeta_k=0$.
Putting $\zeta_k=0$ in (4.1) and using induction hypothesis we have $\zeta_i=0$ for all $i=1,\cdots,k+1$.\\
Hence the result.
\end{proof}
Now the question is whether there really exists such a module. The answer is yes. Actually, one can easily verify the module defining rules. In order to do that we need the following checks for $1\leq i\leq n$ and $0\leq a_i\leq {m-1}$,
\begin{align}
  &e(\underline{a})y_iy_j=q^{-1}~e(\underline{a})y_jy_i,\ \ \ \forall i<j\label{w1}\\
  &e(\underline{a})x_iy_j=q^{-1}e(\underline{a})y_jx_i,\ \ \ \ \ \forall \  \ \ i \neq j.\label{w2}\\
  &e(\underline{a})x_ix_j=qe(\underline{a})x_jx_i, \ \ \forall \ \ \ i<j.\label{w3}\\
   &e(\underline{a})x_{i}y_{i}=e(\underline{a})(y_{i}x_{i}+\sum_{l<i}(1-q^{-2})y_lx_l),\ \ \ \ \ \forall \ i=1,\cdots,n.\label{w4}
\end{align}
The relations (\ref{w1}) - (\ref{w3}) are easy to show.
For relation (\ref{w4}) we have the following calculation. As for $i=1$ the calculations are trivial, we only compute for $2 \leq i \leq n$
\[e(\underline{a})x_{i}y_{i}
=q^{-2(a_n+\cdots+a_{i+1})}\left(1-q^{-2}\right)^{-1}\left(\lambda_i-q^{-2a_i-2}\lambda_{i-1}\right)~e(\underline{a})\]

Now for the right hand side we have,
\begin{align*}
&e(\underline{a})\left(y_{i}x_{i}+\sum_{l<i}(1-q^{-2})y_lx_l\right)\\
&=e(\underline{a})y_{i}x_{i}+e(a_2,\cdots,a_n)\left(\sum_{l<i}(1-q^{-2})y_lx_l\right)\\
&=e(\underline{a})y_{i}x_{i}+e(\underline{a})\omega_{i-1}\\
&=\left[q^{-2(a_n+\cdots+a_{i+1})}\left(1-q^{-2}\right)^{-1}(\lambda_i-q^{-2a_i}\lambda_{i-1})+\lambda_{i-1}~q^{-2(a_n+\cdots+a_i)}\right]~e(\underline{a})\\
&=q^{-2(a_n+\cdots+a_{i+1})}\left(1-q^{-2}\right)^{-1}\left(\lambda_i-q^{-2a_i}\lambda_{i-1}+q^{-2a_i}\lambda_{i-1}-q^{-2a_i-2}\lambda_{i-1}\right)~e(\underline{a})\\
&=q^{-2(a_n+\cdots+a_{i+1})}\left(1-q^{-2}\right)^{-1}\left(\lambda_i-q^{-2a_i-2}\lambda_{i-1}\right)~e(\underline{a}).
\end{align*} 
\textbf{Case II:}
Suppose $\mathbf{I=\{i_1,\cdots,i_p\} (\neq \emptyset)}$. Since for each $i \in I$, $x_i$ is nilpotent on $N$, we must have $\cap_{k=1}^{p}\text{Ker}~~ x_{i_k} \neq \emptyset$. Now each $\omega_i$ and $x_1$ must keep $\cap_{\substack{k=1}}^{p}\text{Ker}~~ x_{i_k}$ invariant. So we can take the common eigenvector $v \in \cap_{k=1}^{p}\text{Ker}~~ x_{i_k}$ of $\omega_i$ and $x_1$. Let $v\omega_i=\lambda_iv$ for $i=1,\cdots,n$ and $vx_1=\gamma_1v$. As $N$ is $\Omega$-torsionfree, we have for each $i=1,\cdots,n-1$, $\lambda_i$ and $\gamma_1$ are nonzero scalars.
\par Then we have the following lemma
\begin{lemma}\label{pp}
     For $1 \leq i \leq n$, let
       \begin{align*}
           z_i&=
    \begin{dcases}
         x_i \ \ \text{if}\ \ i \notin I\\
         y_i \ \ \text{if}\ \ i \in I
    \end{dcases}
       \end{align*}
       Then for all $1 \leq i \leq n$ and for any $a_i$ with $0 \leq a_i \leq m-1$, we have $vz_i^{a_i} \neq 0$.
\end{lemma}
\begin{proof}
     Let us start by fixing $i$, $1 \leq i \leq n$. Then we have the following cases\\
       \textbf{Case 1:} If $i \notin I$, then as $x_i^m=\alpha_i \neq 0$, we have $vx_i^{a_i} \neq 0$ for all $a_i$ with $0 \leq a_i \leq m-1$.\\
       \textbf{Case 2:} If $i \in I \setminus J$, then as $y_i^m=\beta_i \neq 0$, we have $vy_i^{a_i} \neq 0$ for all $a_i$ with $0 \leq a_i \leq m-1$.\\
       \textbf{Case 3:} Note that for $i \in I \cap J$, $vx_{i}=0$ and $y_i^m=0$ on $N$. Suppose $s$, $1 \leq s \leq m$ be the maximum index such that $vy_{i}^{s-1}\neq 0$. If $i=1$, we have
       $$0=vy_1^sx_1=vy_1^{s-1}y_1x_1={(1-q^{-2})}^{-1}vy_1^{s-1}\omega_1,$$ but this contradicts the assumption that $N$ is $\Omega$-torsion-free. Hence $i >1$.
       Note,
\begin{align*}
0=vy_{i}^rx_{i}=&v\left(x_{i}y_{i}^s-\frac{1-q^{2s}}{1-q^2}\omega_{(i-1)}y_{i}^{s-1}\right)\\
    =&-\frac{1-q^{2s}}{1-q^2}\lambda_{(i-1)}vy_{i}^{s-1}. 
\end{align*}
\par Since $vy_{i}^{s-1},\lambda_{i-1} \neq 0$, we must have $s=m$.  Here also for all $a_i=1,2,\ldots,m-1$, $vy_i^{a_i} \neq 0$.\\
So finally we have proved that for any $i$, $vz_i^{a_i}\neq 0$ for all $1 \leq l \leq l-1$.
\end{proof}
Consider the vector subspace $N_1$ of $N$ generated by the vectors $e(\underline{a})$  where $e(\underline{a}):=vb_{\underline{a}}$ where $\underline{a}=(0,a_2,\cdots,a_n)$ where $0 \leq a_i \leq m-1$.
From Lemma \ref{pp}, we have all $e(\underline{a}) \neq 0$.
 We will show that these vectors spans a submodule. In fact, after some direct calculation as done in Case-I, we get 
\begin{align*}
\intertext{For $i=1$,}
e(\underline{a})x_1
&= \gamma_1\prod_{\substack{j=2,\\j \not\in I}}^{n} q^{-a_j} 
   \prod_{\substack{j=2,\\ j \in I}}^{n} q^{a_j} e(\underline{a}) \\
e(\underline{a})y_1
&= \gamma_1^{-1}\frac{\lambda_1}{(1-q^{-2})}
   \prod_{\substack{j=2,\\ j \not\in I}}^{n} q^{-a_j} 
   \prod_{\substack{j=2,\\ j \in I}}^{n} q^{a_j} e(\underline{a}) \\
\intertext{For $i \not\in I$,}
e(\underline{a})x_i
&= \prod_{j=2}^{i-1}q^{a_j}e(\underline{a}+e_i).\\
e(\underline{a})y_i
&=
\begin{dcases}
   \prod_{j=2}^{i-1}q^{-a_j} 
   \smashoperator{\prod_{\substack{j=i+1,\\ j \not\in I}}^{n}} q^{-2a_j} 
   \smashoperator{\prod_{\substack{j=i+1,\\ j \in I}}^{n}}
   q^{2a_j} \frac{\lambda_{i}- q^{-2a_{i}}\lambda_{i-1}}{1-q^{-2}}
   e(\underline{a}+e_{-i}) 
   & \text{if $a_i>0$.} \\
   \alpha_i^{-1} \prod_{j=2}^{i-1}q^{-a_j}  
   \smashoperator{\prod_{\substack{j=i+1,\\j \not\in I}}^{n}}q^{-2a_j}
   \smashoperator{\prod_{\substack{j=i+1,\\ j \in I}}^{n}} 
   q^{2a_j}\frac{\lambda_{i}-\lambda_{i-1}}{1-q^{-2}}e(\underline{a}+e_{-i}) 
   & \text{if $a_i=0$}. 
\end{dcases} \\
\intertext{For $i \in I$,}
e(\underline{a})x_{i}
&=  
\begin{dcases}
    \prod_{j=2}^{i-1}q^{a_j} 
    \smashoperator{\prod_{\substack{j=i+1,\\j \not\in I}}^{n}} q^{-2a_j}
    \smashoperator{\prod_{\substack{j=i+1,\\ j \in I}}^{n}}q^{2a_j} \ 
    \frac{1- q^{2a_{i}}}{q^{2}-1}\lambda_{(i-1)} \ e(\underline{a}+e_{-i})
    & \text{if $a_{i}\neq 0$.} \\
    0 & \text{if $a_{i}=0$.}  
\end{dcases}\\
e(\underline{a})y_{i} 
&= \prod_{j=2}^{i-1}q^{-a_j} e(\underline{a}+e_i).
\end{align*} 
Therefore, again owing to simpleness of $N$, $N=N_1$. Now the following result deciphers the dimension of $N$.
\begin{theo}
$N$ has dimension $m^{n-1}$.
\end{theo}
\begin{proof}
Note that for $e(0,a_2,\cdots,a_n) \neq e(0,b_2,\cdots,b_n)$, both of these vectors are eigenvectors of some of the operator $x_iy_i$ for $i=2,\cdots,n$ with distinct eigenvalues. Using the same method as in Theorem \ref{eucrep}, we can obtain the desired result.
\end{proof}
  As in Case I, we can follow the same method to ensure there does exist a module defined above. So we get the following result
\begin{theo}
Every $\Omega$-torsionfree simple $\mathcal{O}_{q}(\mathfrak{o}\mathbb{K}^{2n})$-module is maximal dimensional.
\end{theo}
\begin{theo}
Let $N$ be a $\omega_2,\omega_3,\cdots,\omega_{n-1}$ torsionfree simple $\mathcal{O}_{q}(\mathfrak{o}\mathbb{K}^{2n})$-module which is $\omega_1$ torsion i.e. $\omega_1N=0$. Then $N$ is a maximal dimensional simple $\mathcal{O}_{q}(\mathfrak{o}\mathbb{K}^{2n})$-module.
\end{theo} 
\begin{proof}
Since $N$ is a simple $\omega_1$-torsion $\mathcal{O}_{q}(\mathfrak{o}\mathbb{K}^{2n})$-module, $\omega_1=(1-q^{-2})y_1x_1=0$ on $N$. Since $x_1$ and $y_1$ are normal in $\mathcal{O}_{q}(\mathfrak{o}\mathbb{K}^{2n})$, they are either $0$ or invertible on $N$. We eliminate the case where $x_1$ and $y_1$ are both $0$ on $N$ otherwise $N$ becomes a simple module over $\mathcal{O}_{q}(\mathfrak{o}\mathbb{K}^{2n-2})$.
\par Without loss of generality let us assume $x_1$ is invertible and $y_1=0$ on $N$. Note that as $\omega_1=0$ on $N$, $N$ becomes a simple module over $B:=\mathcal{O}_{q}(\mathfrak{o}\mathbb{K}^{2n})/\langle \omega_1 \rangle$ and $x_2$ and $y_2$ are normal in $B$. Since $\omega_2 \neq 0$ on $N$, both $x_2$ and $y_2$ must be invertible on $N$. Then the following two cases arise.\\
\textbf{Case-I:} $I = \emptyset$. Consider the commutating operators $\omega_i, \ i=2,3, \cdots,n$, $x_1y_2, \ x_2y_2$. Then we have $v \in N$ with $v \neq 0$ such that \[v\omega_i=\lambda_iv, \ vx_1y_2=\mu_1v, \ vx_2y_2=\mu_2v.\] Note that $\lambda_2,\cdots,\lambda_{n-1},\mu_1,\mu_2 \neq 0$. The subspace $N_1$ of $N$ spanned by the vectors $e(\underline{a}):=vb_{\underline{a}}$ where $a=(a_1,0,a_3,\cdots,a_n)$ and $0 \leq a_i \leq m-1$ becomes a submodule. The actions of the generators are as follows
\begin{align*}
e(\underline{a})x_1&=e(\underline{a}+e_1)\\
e(\underline{a})y_1&= 0.\\
e(\underline{a})y_2&=q^{1-a_1} \prod_{j=3}^{n}q^{-2a_j}~\mu_1~e(\underline{a}+e_{-1})\\
e(\underline{a})x_2&=\mu_1^{-1}\mu_3 q^{-a_1} e(\underline{a}+e_1)\\
e(\underline{a})x_i&= \prod_{\substack{j=1\\j \neq 2}}^{i-1}q^{a_j}e(\underline{a}+e_i) \ \ \text{for $3 \leq i \leq n$.}\\
e(\underline{a})y_i &=
\begin{dcases}\prod_{\substack{j=1\\j \neq 2}}^{i-1}q^{-a_j}\smashoperator{\prod_{j=i+1}^{n}}q^{-2a_j} \frac{(\lambda_i-q^{-2a_i}\lambda_{i-1})}{(1-q^{-2})}e(\underline{a}+e_{-i}),&\text{when} \ \ a_i>0  \\
\alpha_i^{-1}\prod_{\substack{j=1\\j \neq 2}}^{i-1}q^{-a_j}\smashoperator{\prod_{j=i+1}^{n}}q^{-2a_j}\frac{(\lambda_i-\lambda_{i-1})}{(1-q^{-2})}e(\underline{a}+e_{-i}),& \text{when} \ \ a_i=0
\end{dcases}
\end{align*}
Since $N$ is simple, $N=N_1$.
\par $N$ has dimension $m^{n-1}$. The calculations are similar to Theorem \ref{eucrep}. Note that for $e(a_1,0,a_3,\cdots,a_n) \neq e(b_1,0,b_3,\cdots,b_n)$, if $a_1 \neq b_1$ and $a_3=b_3,\cdots,a_n=b_n$, then 
\[e(a_1,0,a_3,\cdots,a_n)x_1y_2=q^{1-a_1}e(a_1,0,a_3,\cdots,a_n)\] and
\[e(b_1,0,b_3,\cdots,b_n)x_1y_2=q^{1-b_1}e(b_1,0,b_3,\cdots,b_n)\]
and the eigenvalues $q^{1-a_1}$ and $q^{1-b_1}$ are distinct.\\
\textbf{Case II:}
 Suppose $\mathbf{I} = \{i_1,\cdots,i_p\}$ where $I$ is not empty. From what we have discussed above, we know that $1 \notin I$. Since for each $i \in I$, $x_i$ is nilpotent on $N$, we must have $\cap_{k=1}^{p}\text{Ker}~~ x_{i_k} \neq \emptyset$. Now each $\omega_i$, $x_1y_2$, and $x_2y_2$ must keep $\cap_{k=1}^{p}\text{Ker}~~ x_{i_k}$ invariant. Thus, we can find a common eigenvector $v \in \cap_{k=1}^{p}\text{Ker}~~ x_{i_k}$ of $\omega_i$, $x_1y_2$, and $x_2y_2$.

Let $v\omega_i=\lambda_iv, \ vx_1y_2=\mu_1v, \ vx_2y_2=\mu_2v$, where $\lambda_2,\cdots,\lambda_{n-1},\mu_1,\mu_2 \neq 0$.

Let $N_1$ be the vector subspace of $N$ spanned by the vectors $e(\underline{a}):=vb_{\underline{a}}$, where $a=(a_1,0,a_3,\cdots,a_n)$ and $0 \leq a_i \leq m-1$. Using similar arguments as in Lemma \ref{pp}, we can conclude that $e(\underline{a}) \neq 0$. We also have the following
\begin{align*}
e(\underline{a})x_1&=e(\underline{a}+e_1)\\
e(\underline{a})x_2&=\mu_1^{-1}\mu_3 q^{a_1} e(\underline{a}+e_1)\\
e(\underline{a})y_1&=  0\\
e(\underline{a})y_2&= q^{1-a_1} \prod_{\substack{j=3,\\j \notin I}}^{n}q^{-2a_j}\smashoperator{\prod_{\substack{j=3,\\j \in I}}^{n}}q^{2a_j}\mu_1e(\underline{a}+e_{-1})\\
\intertext{For $i \not\in I$,} 
\ e(\underline{a})x_i& = \prod_{\substack{j=1,\\j \neq 2}}^{i-1}q^{a_j}e(\underline{a}+e_i).\\
e(\underline{a})y_i&= \begin{dcases}
     \prod_{\substack{j=1,\\j \neq 2}}^{i-1}q^{-a_j}\smashoperator{\prod_{\substack{j=i+1,\\j \not\in I}}^{n}}q^{-2a_j}\smashoperator{\prod_{\substack{j=i+1,\\ j \in I}}^{n}}q^{2a_j} \frac{\lambda_{i}-q^{-2a_{i}}\lambda_{i-1}}{1-q^{-2}}e(\underline{a}+e_{-i})& \text{when} \ \  a_i>0.  \\
   \alpha_i^{-1} \prod_{\substack{j=1,\\j \neq 2}}^{i-1}q^{-a_j} \smashoperator{\prod_{\substack{j=i+1,\\j \not\in I}}^{n}}q^{-2a_j}  \smashoperator{\prod_{\substack{j=i+1,\\ j \in I}}^{n}}q^{2a_j}\frac{\lambda_{i}-\lambda_{i-1}}{1-q^{-2}}e(\underline{a}+e_{-i}) & \text{when}\ \  a_i=0. 
\end{dcases}
\intertext{For  $i \in I$,}
e(\underline{a})x_{i}&= \begin{dcases}
     \prod_{\substack{j=1,\\j \neq 2}}^{i-1}q^{a_j}\smashoperator{\prod_{\substack{j=i+1,\\j \not\in I}}^{n}}q^{-2a_j} \smashoperator{\prod_{\substack{j=i+1,\\ j \in I}}^{n}}q^{2a_j} \ \frac{1-q^{2a_{i}}}{q^{2}-1}\lambda_{(i-1)} e(\underline{a}+e_{-i})& \text{when}\ \ a_{i}\neq 0.   \\
  0 & \text{when} \ a_{i}=0. 
\end{dcases}\\
e(\underline{a})y_{i}&= \prod_{\substack{j=1,\\j \neq 2}}^{i-1}q^{-a_j}e(\underline{a}+e_i)
\end{align*}
So $N_1=N$ and using similar techniques as in Theorem \ref{eucrep} we can conclude that $\dime N=m^{n-1}$
\end{proof}
We can summarize the section by the following theorem 
\begin{theo}\label{nec}
Let $N$ be a $\omega_2,\cdots,\omega_{n-1}$ torsion-free simple $\mathcal{O}_{q}(\mathfrak{o}\mathbb{K}^{2n})$-module. Then $N$ is maximal dimensional simple module.
\end{theo}
\begin{theo}\label{suff}
Let $N$ be a $\omega_2,\cdots,\omega_{n-1}$ torsion simple $\mathcal{O}_{q}(\mathfrak{o}\mathbb{K}^{2n})$-module. Then $N$ is not maximal dimensional. 
\end{theo}
\begin{proof}
    Since $N$ is $\omega_2,\cdots,\omega_{n-1}$ torsion simple $\mathcal{O}_{q}(\mathfrak{o}\mathbb{K}^{2n})$-module there exists some $p$ with $2 \leq p \leq n-1$ such that $N\omega _p=0$.
Then $N$ becomes $\mathcal{O}_q(\mathfrak{o}\mathbb{K}^{2n})/\langle\omega_p\rangle$-module. Since $\mathcal{O}_q(\mathfrak{o}\mathbb{K}^{2n})/\langle\omega_p\rangle$ is a prime PI ring, it follows from Proposition \ref{sim}, 
\[\dime_{\mathbb{K}}N \leq \pideg\mathcal{O}_q(\mathfrak{o}\mathbb{K}^{2n})/\langle \omega_p \rangle = m^{n-2} < m^{n-1}\]
Hence the result follows.
\end{proof}
\begin{theo}\label{maini}
    $N$ is a maximal dimensional simple $\mathcal{O}_q(\mathfrak{o}\mathbb{K}^{2n})$-module if and only if $N$ is $\omega_2,\cdots,\omega_{n-1}$ torsion-free simple $\mathcal{O}_{q}(\mathfrak{o}\mathbb{K}^{2n})$-module
\end{theo}
\begin{proof}
    Follows from Theorem \ref{nec} and Theorem \ref{suff}.
\end{proof}
\section{Center of $\mathcal{O}_{q}(\mathfrak{o}\mathbb{K}^{2n})$}
In this section, we compute the center of $\mathcal{O}_{q}(\mathfrak{o}\mathbb{K}^{2n})$ for $n\geq 2$.
\begin{theo}\label{cen}
The center $Z_n$ is the subalgebra generated by $x_1^ay_1^b$ where $a,b \in \{1,\cdots,m-1\}$ such that $a+b=m$, $x_i^m,y_i^m,i=1,\cdots,m$.
\end{theo}
\begin{proof}
 Let us use induction on $n=$ number of generators.
 \par For $n=2$, let $z=\sum_{\underline{a},\underline{b}}c_{\underline{a},\underline{b}}x_1^{a_1}y_1^{b_1}x_2^{a_2}y_2^{b_2}$ be an element of the center, where $\underline{a}=(a_1,a_2), \underline{b}=(b_1,b_2)$ are tuples of non-negative integers. We will show that if $c_{\underline{a},\underline{b}} \neq 0$ then $m \divides a_1+b_1,a_2,b_2$
 \par Now 
 \begin{align*}
 zx_2&=\left(\sum_{\underline{a},\underline{b}}c_{\underline{a},\underline{b}}x_1^{a_1}y_1^{b_1}x_2^{a_2}y_2^{b_2}\right)x_2\\
 &=\sum_{\underline{a},\underline{b}}c_{\underline{a},\underline{b}}x_1^{a_1}y_1^{b_1}x_2^{a_2}\left[x_2y_2^{b_2}-\frac{1-q^{2b_2}}{1-q^2}\omega_1y_2^{b_2-1}\right]\\
 &=\sum_{\underline{a},\underline{b}}c_{\underline{a},\underline{b}}x_1^{a_1}y_1^{b_1}x_2^{(a_2+1)}y_2^{b_2}-\sum_{\underline{a},\underline{b}}q^{-2a_2}\frac{1-q^{2b_2}}{1-q^2}c_{\underline{a},\underline{b}}\omega_1x_1^{a_1}y_1^{b_1}x_2^{a_2}y_2^{(b_2-1)}
 \end{align*}
Again 
\begin{align*}
    x_2z&=x_2\left(\sum_{\underline{a},\underline{b}}c_{\underline{a},\underline{b}}x_1^{a_1}y_1^{b_1}x_2^{a_2}y_2^{b_2}\right)\\
    &=\sum_{\underline{a},\underline{b}}q^{_(a_1+b_1))}c_{\underline{a},\underline{b}}x_1^{a_1}y_1^{b_1}x_2^{(a_2+1)}y_2^{b_2}
\end{align*}
Since $z$ is central we must have 
\begin{align*}
    q^{-(a_1+b_1)}c_{\underline{a}-e_2,\underline{b}}&=c_{\underline{a}-e_2,\underline{b}}-\frac{1-q^{2(b_2+1)}}{1-q^2}q^{-2a_2}c_{\underline{a},\underline{b}+e_2}\omega_1\\
    \left[q^{-(a_1+b_1)}-1\right]c_{\underline{a}-e_2,\underline{b}}&=-\frac{1-q^{2(b_2+1)}}{1-q^2}q^{-2a_2}c_{\underline{a},\underline{b}+e_2}\omega_1
\text{ for $a_2 \geq 1, b_2 \geq 0$}\\
\left[q^{-(a_1+b_1)}-1\right]c_{\underline{a},\underline{b}}&=-\frac{1-q^{2(b_2+1)}}{1-q^2}q^{-2(a_2+1)}c_{\underline{a}+e_2,\underline{b}+e_2}\omega_1
\text{ for $a_2,b_2 \geq 0$}
\end{align*}
Going on, we get 
\begin{align*}
    {\left[q^{-(a_1+b_1)}-1\right]}^ic_{\underline{a},\underline{b}}&={(-1)}^i\prod_{k=1}^{i}q^{-2(a_2+k)}\prod_{k=1}^{i}\frac{(1-q^{2(b_2+k)})}{{(1-q^{2})}}c_{\underline{a}+ie_2,\underline{b}+ie_2}{\omega_1}^i
\end{align*}
Now there exists $i$ such that $m \divides b_2+i$. So if $m$ does not divide $a_1+b_1$, then $c_{\underline{a},\underline{b}}=0$. If $c_{\underline{a},\underline{b}} \neq 0$, then $m \divides a_1+b_1$.
\par As $m \divides a_1+b_1$, $c_{\underline{a},\underline{b}} \neq 0$. So we must have $m \divides b_2$ from
\begin{align*}
    \left[q^{-(a_1+b_1)}-1\right]c_{\underline{a}-e_2,\underline{b}-e_2}&=-\frac{1-q^{2b_2}}{1-q^2}q^{-2a_2}c_{\underline{a},\underline{b}}~\omega_1
\text{ for $a_2,b_2 \geq 1$}
\end{align*}
\par Now $z\omega_1=\omega_1z$ gives 
\begin{align*}
\sum_{\underline{a},\underline{b}}q^{_(a_1+b_1)}c_{\underline{a},\underline{b}}\omega_1x_1^{a_1}y_1^{b_1}x_2^{a_2}y_2^{b_2}&=\sum_{\underline{a},\underline{b}}q^{(2a_2-2b_2)}c_{\underline{a},\underline{b}}\omega_1x_1^{a_1}y_1^{b_1}x_2^{a_2}y_2^{b_2}
\end{align*}
Since $m \divides b_2$, we have $m \divides a_2$. Hence the result for $n=2$.\\
Let us now assume the result holds for $N-1$. Let $\sum_{p,l}z_{p,l}x_N^{p}y_N^{l}$ be a central element in $\mathcal{O}_{q}(\mathfrak{o}\mathbb{K}^{2n})$ where $z_{p,l}$ is in $\mathcal{O}_{q}(\mathfrak{o}\mathbb{K}^{2N-2})$.\\
Now $x_Nz=zx_N$ gives
\begin{align*}
    \prod_{i=1}^{N-1}q^{-(p_i+l_i)}z_{p-1,l}&=z_{p-1,l}-\frac{1-q^{2l}}{1-q^2}q^{-2p}z_{p,l+1}~\omega_{N-1}.
\end{align*}
Finally as previously we get 
\begin{align*}
    {\left[\prod_{i=1}^{N-1}q^{-(p_i+l_i)}-1\right]}^i~z_{p,l}&={(-1)}^i\prod_{k=1}^{i}q^{-2(p+k)}\frac{1-q^{2(l+k)}}{1-q^2}z_{p+i,l+i}~\omega_{N-1}^{i}.
\end{align*}
By similar arguments as above $z_{p,l}\neq 0$ implies $m \divides \sum_{i=1}^{N-1}(p_i+l_i)$. From this it follows $m\divides l$.\\
Again $z\omega_{N-1}=\omega_{N-1}z$ will imply $m \divides p$.\\
Now as $m \divides p,l$ for any $i=1,2,\cdots, N-1$, $zx_i=x_iz$ implies $z_{p,l}~x_i=x_i~z_{p,l}$. Similarly for any $i=1,2,\cdots, N-1$, $zy_i=y_iz$ implies $z_{p,l}~y_i=y_i~z_{p,l}$. Hence $z_{p,l}$ lies in the center of $\mathcal{O}_{q}(\mathfrak{o}\mathbb{K}^{2N-2})$. Using induction hypothesis, we get the result.
\end{proof}

\begin{theo}\label{vimp}
    The following formulas hold in $\mathcal{O}_{q}(\mathfrak{o}\mathbb{K}^{2n})$:\\
    \begin{align*}
        \omega_i^m&={\left(1-q^{-2}\right)}^m\sum_{l \leq i}y_l^mx_l^m.
    \end{align*}
  for each $i=1,\cdots,n$.
\end{theo}
\begin{proof}
    Let us use induction on $n$.\\
    If $n=1$, then $\omega_i={(1-q^{-2})}y_1x_1$ and $y_1$ and $x_1$ commute with each other implying $\omega_1^m={(1-q^{-2})}^my_1^mx_1^m$.\\
    Assume the result holds for $N-1$.\\
    Now 
    \begin{align*}
        \omega_N&=\omega_{N-1}+{(1-q^{-2})}y_Nx_N.\\
         \omega_N^m&={\left[\omega_{N-1}+{(1-q^{-2})}y_Nx_N\right]}^m.\\
         &=\omega_{N-1}^m+\sum_{l=1}^{m-1}c(l,m)y_N^lx_N^l+{(1-q^{-2})}^my_N^mx_n^m.
         \end{align*} where $c(l,m) \in \mathcal{O}_{q}(\mathfrak{o}\mathbb{K}^{2N-2})$ for each $l$. The above result follows from the fact 
 \begin{itemize}
     \item $\omega_{N-1}$ quasicommutes with both $x_N$ and $y_N$.
     \item For any $p \in \mathbb{N}$, we have 
     \begin{center}
$y_N^px_N^py_Nx_N=y_N^{p+1}x_N^{p+1}+\frac{1-q^{-2p}}{1-q^{-2}}~q^{2p}\omega_{N-1}y_N^px_N^p$
     \end{center}
 \end{itemize}
         So we have
         \begin{align*}
             \omega_N^m-\omega_{N-1}^m-{(1-q^{-2})}^my_N^mx_n^m&=\sum_{l=1}^{m-1}c(l,m)y_N^lx_N^l \in  Z(\mathcal{O}_{q}(\mathfrak{o}\mathbb{K}^{2n}))
         \end{align*}
This implies $c(l,m)=0$ for each $l=1,\cdots,m-1$. Hence the result follows from induction.
\end{proof}
\section{Azumaya Locus}
 Let $\Psi_{\mathbf{m}}:Z_n \rightarrow \mathbb{K}$ denote the central character of $\mathbf{m} \in\mspect ~Z_n\left(\mathcal{O}_{q}(\mathfrak{o}\mathbb{K}^{2n})\right)$. Suppose that $\Psi_{\mathbf{m}}(x_i^m)=\alpha_i$ and $\Psi_{\mathbf{m}}(y_i^m)=\beta_i$.  From Theorem \ref{maini}, we conclude $N$ is a  maximal dimensional $\mathcal{O}_{q}(\mathfrak{o}\mathbb{K}^{2n})$ simple module if and only if $\omega_2^m,\cdots,\omega_{n-1}^m$ acts as non-zero scalars on $N$. From Theorem \ref{vimp}, we end up getting the final result.

 Azumaya locus of $\mathcal{O}_{q}(\mathfrak{o}\mathbb{K}^{2n})$ is given by
\begin{align*}
\mathcal{AL}(\mathfrak{o}\mathbb{K}^{2n})&=\{ \mathbf{m} \in  \text{Maxspec}~Z_n\left(\mathcal{O}_{q}(\mathfrak{o}\mathbb{K}^{2n})\right) :\Psi_{\mathbf{m}}(\omega_i^m) \neq 0\}\\ 
&=\{ \mathbf{m} \in \text{ Maxspec}~Z_n(\mathcal{O}_{q}(\mathfrak{o}\mathbb{K}^{2n})) : \sum_{l \leq i}\beta_l \alpha_l\neq 0,\ \ 2\leq i\leq n-1\}.
\end{align*}
\section*{Declaration Of Interest}
The author has no conflict of interest to disclose.

\end{document}